\theoremstyle{plain}
\newtheorem{theorem}[equation]{Theorem}
\newtheorem*{FundClaim*}{Fundamental Claim}
\newtheorem{lemma}[equation]{Lemma}
\newtheorem{corollary}[equation]{Corollary}
\newtheorem{proposition}[equation]{Proposition}
\theoremstyle{definition}
\newtheorem{example}[equation]{Example}
\begin{document}
\bigskip

\begin{center}
	\Large{\bf \large A Note on Strongly $\pi$-Regular Elements}
	\bigskip
	
	\large{Dimple Rani Goyal}
	
	\bigskip
	%\large{Dimple Rani Goyal\footnote{The work of the author is supported by a UGC grant and will form a part of her Ph.D. dissertation under the supervision of Dinesh Khurana.}}
\end{center}

\medskip
\begin{abstract}
\begin{small}
In this article, we prove that in a PI-ring (or polynomial identity ring) $S$, for an element $A \in \mathbb{M}_m(S)$ if $A^n= A^{n+1}X$ for some $n \in \mathbb{N}$ and $X \in \mathbb{M}_m(S)$, then there exists an element $Y\in \mathbb{M}_m(S)$ such that $A^n = YA^{n+1}$. As a consequence, we show that this property also holds in matrix rings over commutative rings, thereby confirming a recent conjecture proposed by Călugăreanu and Pop. Moreover, we present another independent proofs of this conjecture, highlighting different structural approaches and techniques. \\[4mm]
{\em Keywords:} Strongly $\pi$-regular elements, Strongly regular elements, Exchange rings, Prime factor rings, Dedekind-finite rings, PI-rings.\\[3mm]
Mathematics Subject Classification 2020: 16E50, 16U99
\end{small}
\end{abstract}
\section{Introduction}
In 1948, Kaplansky and Arens \cite{ArensKap} defined a ring $R$ to be \emph{strongly regular} if, for every element $a \in R$, there exists an element $y \in R$ such that $a = a^{2}y$. The term \emph{strongly regular} emphasized that such rings satisfy a stronger condition than ordinary regular or biregular rings, and because the defining relation appeared symmetric. In \cite{ArensKap}, they also studied rings in which every element $a$ satisfies the more general condition $a^{n} = a^{n+1}x$ for a fixed $n$. Two years later, Kaplansky \cite{KP} proved that this more general definition is indeed symmetric. Although the defining condition $a = a^{2}x$ (or its left analogue $a = ya^{2}$)is symmetric globally, it is not element-wise symmetric in general. Kaplansky \cite{KP} initiate the study of rings in which every element $a$ satisfies $a^{n} = a^{n+1}x$, where both $x$ and $n \in \mathbb{N}$ are independent of $a$. Subsequently, in 1953, Azumaya \cite{Az} formally introduced the terminology \emph{right regular element} and \emph{right $\pi$-regular element} if $a$ satisfies the conditions $a = a^{2}x$ and $a^{n} = a^{n+1}x$ respectively. Much later, in 2014, Dittmer et al. \cite{Dit} revised the terminology, renaming right $\pi$-regular elements as \emph{right strongly $\pi$-regular} for greater clarity. Motivated by this convention, in the present paper we adopt the term \emph{right strongly regular} in place of \emph{right regular}. The same notation was used in \cite{CalPop}.\\

\par Recall that an element $a$ of ring $R$ is called {\em right strongly $\pi$-regular} if $a^n = a^{n+1}x$ for some $n \in \mathbb{N}$ and $x \in R$. If $n=1$, then $a$ is said to be {\em right strongly regular}. Left strongly $\pi$-regular and left strongly regular elements are defined similarly. An element which is both left and right strongly $\pi$-regular (left and right strongly regular) is called {\em strongly $\pi$-regular} (respectively, {\em strongly regular}). A ring in which every element is right strongly $\pi$-regular is called right strongly $\pi$-regular. The left versions of these rings are defined similarly. In 1976, Dischinger \cite{Dischinger} proved that a ring is left strongly $\pi$-regular if and only if it is right strongly $\pi$-regular. Nowadays, these rings are simply called strongly $\pi$-regular.\\

\par It is easy to see that Dischinger's result is not true locally. For example, if $R$ is not Dedekind-finite and there exist $a,\,b\in R$ such that $ab = 1$ but $ba \neq 1$, then $aR = a^2R = R$ but $Ra \neq Ra^2$.\\ {%In \cite{Dit}, Dittmer et al. proved that if $R$ is a Dedekind-finite exchange ring then every right (left) strongly regular element is strongly regular}
 \par In \cite{CalPop}, C\u{a}lug\u{a}reanu and Pop mentioned that with the help of computer, they proved that Dischinger's result holds locally for $2 \times 2$ matrices over the ring $\mathbb{Z}_n$ for $n=4,9,16$ and $\mathbb{Z}$. Also, they proposed a conjecture \cite[Conjecture 3.7]{CalPop} that in $ \mathbb{M}_2(S)$, where $S$ is a commutative ring, every left strongly regular element is right strongly regular. Following this, a natural question arise that if every left strongly regular element is strongly regular in ring $R$, does same holds in $\mathbb{M}_n(R)$. In the present work, we aim to resolve this question not only for commutative rings but also for a broader class of rings. This question is also interesting in view of Morita theory. Recall that a ring theoretic property $P$ is Morita invariant if and only if whenever a ring $R$ satisfies $P$ so do $eRe$ (for any full idempotent $e \in R$) and $\mathbb{M}_n(R)$ (for any integer $n \geq 2$).\\
\par Recall that a ring $R$ is PI-ring (or polynomial identity ring) if it satisfies a monic polynomial $g \in \mathbb{Z}<x_1,..., x_k>$ where $x_i$ are noncommuting indeterminates. PI rings are very large and interesting class of rings. For more details on these rings (see \cite{Rowen}). It is well known that the homomorphic image of PI ring is again PI. \\
\par In this short note, firstly we prove that for any $A \in  \mathbb{M}_m(S)$, where $S$ is a PI-ring if $A^n\mathbb{M}_m(S) = A^{n+1}\mathbb{M}_m(S)$ for some positive integer $n$ then  $\mathbb{M}_m(S)A^n = \mathbb{M}_m(S)A^{n+1}$. As a corollary, we prove that the conjecture of C\u{a}lug\u{a}reanu and Pop  \cite[Conjecture 3.7]{CalPop} is true. Lastly, we provide an example (essentially due to Shepherdson) in a ring $R$ in which Dischinger's result holds locally but fails to hold in $M_{2}(R)$.
\\

\section{Results}
Firstly, We list some results required from the literature and their consequences. The following result was proved by Drazin \cite[Theorem 4]{Dr}(see also \cite[Corollary 1.2]{GK2}. The next two results also hold for rings without unity.

\begin{lemma}\label{N1}
If $a$ is a strongly $\pi$-regular element in a ring $R$ and $a^n = a^{n+1}x$, then for $w = a^nx^{n+1}$, we have  $aw = wa$, $a^n = a^{n+1}w$.
\end{lemma}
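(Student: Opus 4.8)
The plan is to separate the two conclusions, since the second is elementary while the first forces us to use the full (two-sided) strength of the hypothesis. First I would record the \emph{propagation} of the right relation: from $a^n=a^{n+1}x$ one gets $a^{n+j}=a^{n+j+1}x$ by multiplying on the left by $a^j$, and then $a^n=a^{n+k}x^k$ for every $k\ge 0$ by an immediate induction. Taking $k=n+1$ gives $a^{n+1}w=a^{n+1}a^nx^{n+1}=a^{2n+1}x^{n+1}=a^n$, which is the second assertion. The same relations also collapse $aw$: since $a^{n+1}x=a^n$, we have $aw=a^{n+1}x^{n+1}=(a^{n+1}x)x^{n}=a^nx^n$. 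All of this uses only the right relation.

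The harder assertion is $aw=wa$, and here I expect the main obstacle to be that the right relation alone is genuinely insufficient: for a purely one-sided example such as the left shift $a$ on $k^{(\mathbb{N})}$ with $x$ a right inverse of $a$, the resulting $w$ satisfies $aw=1\ne wa$. Thus commutativity must consume the hypothesis that $a$ is \emph{strongly} $\pi$-regular, i.e.\ also \emph{left} strongly $\pi$-regular, and the shift shows the two-sided hypothesis is really needed rather than decorative. So the next step is to produce a left relation $a^n=ya^{n+1}$. This is available because the left relation propagates upward exactly like the right one ($a^{m}=ya^{m+1}\Rightarrow a^{m+j}=ya^{m+j+1}$), so both relations hold at any common exponent $N\ge\max(m,n)$; moreover $w=a^nx^{n+1}=a^Nx^{N+1}$ is unchanged when the exponent is raised (again by propagation of the right relation), so I may as well assume that the two relations share the exponent $n$.

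The bridge between the two sides is then a one-line identity obtained by playing the relations against each other: $ya^n=y(a^{n+1}x)=(ya^{n+1})x=a^nx$, and iterating gives $a^nx^k=y^ka^n$ for all $k\ge 0$. Now both one-sided simplifications land on the same element: $aw=a^nx^n=y^na^n$, while $wa=(a^nx^{n+1})a=y^{n+1}a^{n+1}=y^n(ya^{n+1})=y^na^n$. Hence $aw=wa=y^na^n$, which finishes the argument. In summary, the routine part is pure bookkeeping with the right relation, and the only real content is recognising that commutativity forces in the left relation and that the key identity $a^nx=ya^n$ is what welds the right-handed construction of $w$ to its left-handed mirror.
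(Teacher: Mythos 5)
Your proof is correct. The paper does not actually prove this lemma; it quotes it from Drazin \cite[Theorem 4]{Dr}, and your argument --- propagate both one-sided relations to a common exponent (checking, correctly, that $w=a^nx^{n+1}=a^Nx^{N+1}$ is unchanged because $a^n=a^Nx^{N-n}$), derive the bridge identity $a^nx=ya^n$, and show $aw$ and $wa$ both collapse to $y^na^n$ --- is essentially the classical argument behind that citation. It is also compatible with the paper's remark that the lemma holds in rings without unity, since every power of $x$ and $y$ you actually use is positive.
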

 Dittmer et al. proved the following result in \cite[Lemma 4.4]{Dit}. We modify this result for rings without unity in the following way and provides a more simpler proof using Theorem \ref{N1}.  Note that, in the next result, it is not necessary for the subring $S$ to have a unity.
 
\begin{proposition}\label{N2}
Let $S$ be a subring of ring $R$ and $a\in S$ be such that $a^n = a^{n+1}x$ for some positive integer $n$ and $x \in S$. If $a$ is strongly $\pi$-regular in $R$, then there exists an element $y \in S$ such that $a^n = ya^{n+1}$. In particular, if Dischinger's result holds locally in $R$, then it also holds in $eRe$, where $e$ is an idempotent of $R$.
\end{proposition}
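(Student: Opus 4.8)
The plan is to feed the hypotheses directly into Lemma~\ref{N1}. Since $a$ is strongly $\pi$-regular in $R$ and $a^{n}=a^{n+1}x$, Lemma~\ref{N1} supplies the element $w=a^{n}x^{n+1}$ with $aw=wa$ and $a^{n}=a^{n+1}w$. The decisive observation is that this witness is a word in $a$ and $x$ alone, so $w=a^{n}x^{n+1}\in S$: although the commutativity relation $aw=wa$ was obtained by exploiting the strong $\pi$-regularity of $a$ in the ambient ring $R$, the element it produces never leaves $S$.

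Granting this, the left-sided relation is immediate. Because $w$ commutes with $a$ it commutes with every power of $a$, so $a^{n+1}w=wa^{n+1}$; combining with $a^{n}=a^{n+1}w$ yields $a^{n}=wa^{n+1}$, and hence $y=w=a^{n}x^{n+1}\in S$ is the required element. Thus the whole argument collapses to Lemma~\ref{N1} together with the bookkeeping that its output lies in $S$. I do not anticipate a genuine obstacle here; the only point demanding care is that $S$ need not contain the identity of $R$, which is exactly why Lemma~\ref{N1} was stated without assuming a unity, and it is also what licenses $S$ to be a corner ring in the application below.

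For the ``in particular'' clause I would take $S=eRe$, regarded as a subring of $R$. Let $a\in eRe$ be right strongly $\pi$-regular in $eRe$, say $a^{n}=a^{n+1}x$ with $x\in eRe$; since powers and products computed in $eRe$ agree with those in $R$, this same identity holds in $R$, so $a$ is right strongly $\pi$-regular in $R$. If Dischinger's result holds locally in $R$, then $a$ is strongly $\pi$-regular in $R$, and the first part of the proposition (applied with this $S$) produces $y\in eRe$ with $a^{n}=ya^{n+1}$; that is, $a$ is left strongly $\pi$-regular in $eRe$. The reverse implication follows by the symmetric argument in the opposite ring, so Dischinger's result holds locally in $eRe$.
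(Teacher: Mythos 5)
Your argument is correct and is essentially the paper's own proof: both invoke Lemma~\ref{N1} to produce the witness $w=a^{n}x^{n+1}$, observe that it lies in $S$, and use $aw=wa$ together with $a^{n}=a^{n+1}w$ to get $a^{n}=wa^{n+1}$. You merely spell out the final commutation step and the $eRe$ application a bit more explicitly than the paper does.
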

\begin{proof}
As $a^n = a^{n+1}x$ and $x \in S$. Since $a$ is strongly $\pi$-regular in $R$, so by Lemma \ref{N1}, for $w = a^nx^{n+1}$, we have $wa = aw$ and $a^n = a^{n+1}w$. Since $w = a^nx^{n+1} \in S$, $a$ is strongly $\pi$-regular in $S$.\qed
\end{proof}
\vspace{2mm}

In \cite[Theorem 2.1]{FS} Fisher and Snider proved that a ring $R$ is strongly $\pi$-regular if it so modulo every prime ideal. Their proof also gives the following element-wise result. 

\begin{theorem}\label{N3}
Let $a \in R$ be such that $a+P$ is strongly $\pi$-regular in $R/P$ for every prime ideal $P$ of $R$, then $a$ is strongly $\pi$-regular in $R$.
\end{theorem}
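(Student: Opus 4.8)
The plan is to argue by contradiction using a Zorn's lemma reduction to a prime factor, mirroring the Fisher--Snider argument but tracking only the single element $a$. Suppose $a$ is not strongly $\pi$-regular in $R$, and let $\mathcal{F}$ be the family of two-sided ideals $I$ of $R$ for which the image $a+I$ is not strongly $\pi$-regular in $R/I$. Then $0\in\mathcal{F}$, so $\mathcal{F}\neq\emptyset$. The first step is to check that $\mathcal{F}$ is closed under unions of chains: strong $\pi$-regularity of $a+I$ is witnessed by finitely many data (an integer $n$ together with a right witness $x$ and a left witness $y$ satisfying $a^n-a^{n+1}x\in I$ and $a^n-ya^{n+1}\in I$), so if the image of $a$ in $R/\bigcup_\alpha I_\alpha$ were strongly $\pi$-regular, those finitely many congruences would already hold modulo a single $I_\alpha$. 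Hence, by Zorn's lemma, $\mathcal{F}$ has a maximal member $P$. Since the hypothesis guarantees that $a+Q$ is strongly $\pi$-regular in $R/Q$ for every prime ideal $Q$, it suffices to prove that $P$ is prime, which will contradict $P\in\mathcal{F}$.

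To show $P$ is prime I would assume the contrary and choose ideals $B,C$ with $B,C\supsetneq P$ and $BC\subseteq P$. By maximality of $P$, the element $a$ is strongly $\pi$-regular modulo $B$ and modulo $C$. Passing to $\bar R=R/P$ and writing $\bar B,\bar C$ for the images, we have $\bar B\bar C=0$, so the kernel $\bar B\cap\bar C$ of the natural map $\bar R\to R/B\times R/C$ is a square-zero ideal and $\bar R/(\bar B\cap\bar C)$ embeds as a subring of the product $R/B\times R/C$. In that product the image of $a$ is strongly $\pi$-regular, since an element of a finite direct product is strongly $\pi$-regular exactly when each coordinate is (one raises the two exponents to a common value). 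The idea is then to push this down: by Lemma \ref{N1} the witnesses modulo $B$ and modulo $C$ may be replaced by elements commuting with $a$, and if I can produce a single right witness $s\in R$ and an exponent $M$ with $a^M-a^{M+1}s\in\bar B\cap\bar C$, then Proposition \ref{N2}, applied to the subring $\bar R/(\bar B\cap\bar C)$ of the product, upgrades this to genuine strong $\pi$-regularity there, and a final lifting across the square-zero ideal $\bar B\cap\bar C$ returns strong $\pi$-regularity of $a$ in $\bar R$, i.e.\ modulo $P$.

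The main obstacle is exactly the construction of this common witness, i.e.\ the descent from ``strongly $\pi$-regular modulo $B$ and modulo $C$'' to ``strongly $\pi$-regular modulo $B\cap C$'' when $BC\subseteq P$. A naive Chinese-remainder patching of the separate witnesses $x_1$ (accurate modulo $B$) and $x_2$ (accurate modulo $C$) fails because $B+C\neq R$ in general, and one checks that correcting $x_2$ to be accurate modulo $B$ reintroduces an error modulo $C$; a direct manipulation in fact shows that the bare existence of such an $s$ is essentially equivalent to the conclusion. The way around this is to work with the commuting Drazin inverses supplied by Lemma \ref{N1} and the associated idempotents (of the shape $\overline{a^n w^n}$ modulo $C$, and its analogue modulo $B$), and to glue them into a single idempotent modulo $\bar B\cap\bar C$, using that $(\bar B\cap\bar C)^2=0$ together with the fact that idempotents and Drazin inverses lift across nilpotent ideals. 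An alternative organization I would keep in reserve replaces the two-ideal step by reduction modulo the prime (lower nil) radical $N$: one first shows $a$ is strongly $\pi$-regular modulo $N$ --- the delicate point there being a uniform bound on the exponent across the subdirectly represented prime factors --- and then lifts across the nil ideal $N$. Either route isolates the same technical core, namely transporting strong $\pi$-regularity across a nilpotent ideal, and this is where I expect the real work to lie.
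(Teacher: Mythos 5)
Your Zorn reduction is the right skeleton, and it is exactly the Fisher--Snider argument that the paper invokes for this theorem: the family $\mathcal{F}$ of ideals $I$ for which $a+I$ fails to be strongly $\pi$-regular is inductive because the condition is witnessed by finitely many congruences, so everything reduces to showing that a maximal member $P$ of $\mathcal{F}$ is prime. The problem is that you stop at precisely the step that carries all of the content. Having chosen $B,C\supsetneq P$ with $BC\subseteq P$, you must deduce strong $\pi$-regularity of $a$ modulo $P$ from its strong $\pi$-regularity modulo $B$ and modulo $C$. You correctly diagnose why the obvious moves fail --- the witness in $R/B\times R/C$ need not lie in the image of $\bar R/(\bar B\cap\bar C)$, so Proposition \ref{N2} does not apply, and naive patching of the two witnesses is circular --- but your proposed repair (``glue the idempotents modulo $\bar B\cap\bar C$, then lift across the square-zero ideal'') is only named, not carried out, and you close with ``this is where I expect the real work to lie.'' That is an acknowledged gap, not a proof: gluing $\overline{a^nw_B^n}$ and $\overline{a^nw_C^n}$ into one idempotent modulo $\bar B\cap\bar C$ would require the two to agree modulo $B+C$, which nothing guarantees, and the lifting step presupposes exactly the statement modulo $\bar B\cap\bar C$ that is missing. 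Your fallback via the lower nilradical is in worse shape, for the reason you yourself flag: the subdirect embedding of $R/N$ into the product of the prime factors gives no uniform exponent, and $N$ is nil but not nilpotent.

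The missing descent does have a short direct proof, but it is a computation with the commuting witnesses of Lemma \ref{N1}, not an idempotent-lifting argument. Work modulo $P$, so $BC=0$, and fix one exponent $n$ and elements $u,v$ with $au-ua,\ a^n-a^{n+1}u,\ a^n-ua^{n+1}\in B$ and $av-va,\ a^n-a^{n+1}v,\ a^n-va^{n+1}\in C$. Put $c=a^n-a^{n+1}v\in C$ and $b=a^n-a^{n+1}u\in B$. Then
\[
a^{2n}-a^{2n+1}v=a^nc=(a^{n+1}u+b)c=a^{n+1}uc,
\]
since $bc\in BC=0$; commuting $u$ past powers of $a$ modulo $B$ and annihilating each error term against $a^nc\in C$ gives $a^nc=a^ju^ja^nc$ for every $j$, hence $a^{2n}=a^{2n+1}\bigl(v+u^{2n+1}a^nc\bigr)\in a^{2n+1}R$. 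The mirror computation with the left witnesses $a^n-ua^{n+1}\in B$ and $a^n-va^{n+1}\in C$ gives $a^{2n}\in Ra^{2n+1}$, and Azumaya's lemma finishes. Note that both computations are required --- one-sided strong $\pi$-regularity does not transfer to the other side element-wise, as the paper's Shepherdson example emphasizes --- and that only the product $BC$, not $CB$, is controlled, so the order of the factors in each expansion is forced. Until this step is written out, your proposal establishes the framework of the proof but not the theorem.
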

Using Proposition \ref{N2} and Theorem \ref{N3}, we have the following result. 
\begin{theorem}\label{N3.1}
Let $R$ be a ring whose every prime factor ring embeds in a ring in which every right strongly $\pi$-regular element is strongly $\pi$-regular. Then every right strongly $\pi$-regular element in $R$ is also strongly $\pi$-regular.
\end{theorem}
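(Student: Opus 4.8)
The plan is to reduce the statement to the prime factor rings of $R$ and then reassemble via the Fisher--Snider principle recorded in Theorem \ref{N3}. Concretely, suppose $a \in R$ is right strongly $\pi$-regular, so that $a^n = a^{n+1}x$ for some $n \in \mathbb{N}$ and $x \in R$. My goal is to verify the hypothesis of Theorem \ref{N3}, namely that $a+P$ is (two-sided) strongly $\pi$-regular in $R/P$ for every prime ideal $P$ of $R$; once this is in hand, Theorem \ref{N3} immediately yields that $a$ is strongly $\pi$-regular in $R$.

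So fix a prime ideal $P$. Applying the canonical surjection $R \to R/P$ to the relation $a^n = a^{n+1}x$ gives $(a+P)^n = (a+P)^{n+1}(x+P)$, so $a+P$ is right strongly $\pi$-regular in $R/P$, with witness $x+P$ lying in $R/P$. By hypothesis there is an embedding $R/P \hookrightarrow T$ into a ring $T$ in which every right strongly $\pi$-regular element is strongly $\pi$-regular. Since the defining equation transports along this embedding, $a+P$ is right strongly $\pi$-regular in $T$, and hence, by the property of $T$, it is strongly $\pi$-regular in $T$.

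Now I would invoke Proposition \ref{N2} with the subring $S = R/P$ sitting inside the ambient ring $T$. The element $a+P$ lies in $S$, satisfies $(a+P)^n = (a+P)^{n+1}(x+P)$ with $x+P \in S$, and is strongly $\pi$-regular in $T$; the proposition therefore produces an element $y \in S = R/P$ with $(a+P)^n = y\,(a+P)^{n+1}$. Thus $a+P$ is also left strongly $\pi$-regular in $R/P$, and being both left and right strongly $\pi$-regular there, it is strongly $\pi$-regular in $R/P$. As $P$ was arbitrary, the hypothesis of Theorem \ref{N3} is met and the conclusion follows.

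The argument is essentially a clean concatenation of the three prior results, so I do not expect a genuine obstacle; the points requiring care are bookkeeping ones. First, one must make sure the witness $x+P$ actually belongs to the subring $R/P$ and not merely to $T$, since Proposition \ref{N2} crucially requires the right-regularity datum to live inside $S$ -- this is automatic because $x+P$ is the image of $x \in R$. Second, it is essential that the hypothesis furnishes genuine (two-sided) strong $\pi$-regularity of $a+P$ in $T$, not just a one-sided version, since that two-sided property is exactly what Proposition \ref{N2} consumes in order to output the left witness $y$ back in $R/P$.
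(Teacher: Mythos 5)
Your proposal is correct and follows exactly the route the paper intends: push the relation $a^n = a^{n+1}x$ down to each prime factor $R/P$, use the embedding and the hypothesis on the target ring to get two-sided strong $\pi$-regularity there, pull the left witness back into $R/P$ via Proposition \ref{N2}, and reassemble with Theorem \ref{N3}. The paper gives no separate proof beyond citing these two results, and your write-up (including the observation that Proposition \ref{N2} needs the witness $x+P$ to lie in the subring $R/P$, which is why the proposition is stated for possibly non-unital subrings) supplies precisely the intended argument.
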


Recall that a ring $R$ is {\em strongly $2$-primal} if every prime ideal of $R$ is completely prime. Hence, by Theorem \ref{N3.1}and as in every domain Dischinger 's result holds,  we can prove the following result easily,
\begin{corollary}
Dischinger's result holds locally in Strongly $2$-primal rings.    
\end{corollary}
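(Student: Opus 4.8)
The plan is to read the corollary as the element-wise statement that every right strongly $\pi$-regular element of $R$ is strongly $\pi$-regular, and to obtain it by feeding the strongly $2$-primal hypothesis into Theorem \ref{N3.1}. First I would record the structural consequence of the hypothesis: in a strongly $2$-primal ring $R$ every prime ideal $P$ is completely prime, so each prime factor ring $R/P$ is a domain. This reduces the whole problem to checking the local Dischinger property inside a domain, after which Theorem \ref{N3.1} does the packaging.

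The one computation that actually needs doing is the folklore fact, flagged in the statement itself, that in any domain $D$ every right strongly $\pi$-regular element is already strongly $\pi$-regular. I would argue as follows: if $a\in D$ satisfies $a^n=a^{n+1}x$, then $a^n(1-ax)=0$, and since $D$ has no zero divisors either $a^n=0$ or $ax=1$. In the first case $a$ is nilpotent and the left relation holds trivially (indeed $a^n=0=0\cdot a^{n+1}$); in the second case $a(xa-1)=axa-a=a-a=0$ with $a\neq 0$ forces $xa=1$, so $a$ is a unit and is left strongly $\pi$-regular as well. In either case $a$ is strongly $\pi$-regular, so the domain $D$ has the required property.

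To finish, I would note that since each prime factor ring $R/P$ is a domain, it embeds (via the identity map) into a ring, namely itself, in which every right strongly $\pi$-regular element is strongly $\pi$-regular. The hypothesis of Theorem \ref{N3.1} is thereby verified, and that theorem yields that every right strongly $\pi$-regular element of $R$ is strongly $\pi$-regular, i.e.\ Dischinger's result holds locally in $R$. I do not expect a genuine obstacle here: the argument is essentially a repackaging of Theorem \ref{N3.1} with the elementary domain computation above. The only point requiring mild care is the noncommutative cancellation $a(xa-1)=0\Rightarrow xa=1$, which relies on the observation that $a\neq 0$ whenever $ax=1$.
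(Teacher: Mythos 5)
Your proposal is correct and follows exactly the route the paper intends: since $R$ is strongly $2$-primal, every prime factor ring $R/P$ is a domain, the local Dischinger property is verified directly in domains, and Theorem \ref{N3.1} (with the identity embedding) finishes the argument. Your explicit verification that in a domain $a^n=a^{n+1}x$ forces $a=0$ or $ax=xa=1$ is precisely the ``in every domain Dischinger's result holds'' step that the paper leaves to the reader.
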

In \cite[Corollary 3.5]{Dit}, Dittmer et al.\ proved the following result.
\begin{lemma}\label{Dfer}
 If $R$ is a Dedekind-finite exchange ring, then Dischinger's result holds locally.
\end{lemma}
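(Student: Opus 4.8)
The plan is to reformulate right strong $\pi$-regularity as a chain condition on right ideals and then, using the exchange property, to show that the resulting \emph{stable image} is a direct summand of $R_R$ generated by an idempotent; the only place Dedekind-finiteness is needed is to promote a surjection to a bijection. Concretely, if $a$ is right strongly $\pi$-regular then $a^nR = a^{n+1}R$ for some $n$, hence $a^nR = a^{2n}R$, so writing $b := a^n$ we obtain $b = b^2y$ for some $y \in R$. It then suffices to produce $z \in R$ with $a^n = z a^{n+1}$, which gives left strong $\pi$-regularity; the reverse implication (left $\Rightarrow$ right) is entirely symmetric, since both the exchange hypothesis and Dedekind-finiteness are left--right symmetric, so this settles Dischinger's result locally.

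First I would extract an idempotent cutting out $a^nR$. Put $c := by$, so that $bc = b^2y = b$ and therefore $b(1-c)=0$. Applying the exchange property of $R$ to $c$ yields an idempotent $e$ with $e \in cR$ and $1-e = (1-c)w$ for some $w \in R$. Then $b(1-e) = b(1-c)w = 0$, so $b = be$ and $bR \subseteq eR$; on the other hand $e \in cR \subseteq bR$, so $eR \subseteq bR$. Hence $eR = bR = a^nR$, and in particular $a^nR$ is a direct summand of $R_R$, say $R = I \oplus K$ with $I := eR = a^nR$ and $K := (1-e)R$.

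Next I would invoke Dedekind-finiteness. Left multiplication by $a$ gives a right $R$-module endomorphism $\phi$ of $I$, and it is surjective because $aI = a\cdot a^nR = a^{n+1}R = a^nR = I$. To see that $\phi$ is injective, extend it by the identity on $K$ to an endomorphism $\Phi$ of $R_R$ with $\Phi|_I = \phi$ and $\Phi|_K = \mathrm{id}_K$. Then $\Phi$ is onto, and since $\mathrm{End}(R_R) \cong R$ acts by left multiplication, $\Phi = \lambda_d$ for some right-invertible $d$; Dedekind-finiteness forces $d$ to be a unit, so $\Phi$, and hence its restriction $\phi$, is injective. Thus $\phi$ is an automorphism of $I = eR$.

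Finally, $\phi^{-1} \in \mathrm{End}_R(eR) \cong eRe$ is itself left multiplication by some $z \in eRe$. Since $a^n, a^{n+1} \in a^nR = I$ and $\phi(a^n) = a^{n+1}$, applying $\phi^{-1}$ gives $z a^{n+1} = a^n$, as required. The main obstacle is the idempotent extraction of the second step: one must convert the purely right-sided information $b = b^2y$ into a genuine direct-summand decomposition, and it is precisely here that the exchange hypothesis does the essential work, the identity $b(1-c)=0$ being what lets the exchange idempotent for $c$ capture all of $a^nR$. By contrast, Dedekind-finiteness enters only through the clean ``extend by the identity'' device, which lets us argue inside $R$ itself and thereby avoids the (generally false) assertion that corners of Dedekind-finite rings are Dedekind-finite.
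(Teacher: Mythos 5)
Your proof is correct. Note that the paper itself offers no argument for this lemma --- it is quoted verbatim from Dittmer et al.\ \cite[Corollary 3.5]{Dit} --- so there is no internal proof to compare against; what you have written is a complete, self-contained derivation in the standard Fitting-lemma style. The two pivotal steps both check out: Nicholson's characterization of exchange rings applied to $c=by$ (where $b=a^n=b^2y$) does produce an idempotent $e$ with $eR=a^nR$, since $b(1-e)=b(1-c)w=0$ forces $bR\subseteq eR$ while $e\in cR\subseteq bR$ gives the reverse inclusion; and your device of extending $\phi=\lambda_a|_{eR}$ by the identity on $(1-e)R$ to get $\Phi=\lambda_{ae+(1-e)}$ correctly reduces injectivity to the Dedekind-finiteness of $R$ itself, sidestepping any claim about the corner $eRe$. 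The identification $\operatorname{End}_R(eR)\cong eRe$ then legitimately yields $z\in eRe$ with $za^{n+1}=a^n$, and the converse direction does follow by symmetry since both hypotheses are left--right symmetric. The only cosmetic remark: the element $c=by$ need not be idempotent, but you never claim it is, and Nicholson's criterion applies to arbitrary elements, so no repair is needed.
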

In view of above result, we have the following corollary of Theorem \ref{N3.1}.
\begin{corollary}
Let $R$ be a ring whose every prime factor ring embeds in a Dedekind-finite exchange ring. Then Dischinger's result holds locally in $R$.
\end{corollary}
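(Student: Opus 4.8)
The plan is to combine Lemma~\ref{Dfer} with Theorem~\ref{N3.1}; no genuinely new argument is needed, only a careful matching of hypotheses. First I would recall that the phrase ``Dischinger's result holds locally'' is, by definition, the element-wise assertion that every right strongly $\pi$-regular element is strongly $\pi$-regular. With this reading in hand, Lemma~\ref{Dfer} says precisely that a Dedekind-finite exchange ring is an example of a ring in which every right strongly $\pi$-regular element is strongly $\pi$-regular.

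Next I would invoke the hypothesis of the corollary: every prime factor ring of $R$ embeds in a Dedekind-finite exchange ring. By the observation just made, each such Dedekind-finite exchange ring is a ring in which every right strongly $\pi$-regular element is strongly $\pi$-regular. Hence every prime factor ring of $R$ embeds in a ring of exactly that type. This is verbatim the hypothesis required by Theorem~\ref{N3.1}.

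Applying Theorem~\ref{N3.1} then immediately yields that every right strongly $\pi$-regular element of $R$ is strongly $\pi$-regular, which is the desired conclusion that Dischinger's result holds locally in $R$. The main (and only) obstacle is purely terminological rather than mathematical: one must recognize that ``Dischinger's result holds locally'' in Lemma~\ref{Dfer} is the same property that appears inside the embedding hypothesis of Theorem~\ref{N3.1}, so that the former feeds directly into the latter. Once this identification is made, the proof is a one-line composition of the two results.
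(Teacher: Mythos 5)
Your proposal is correct and is exactly the argument the paper intends: the corollary is stated as an immediate consequence of combining Lemma~\ref{Dfer} (Dedekind-finite exchange rings satisfy the element-wise Dischinger property) with the embedding hypothesis of Theorem~\ref{N3.1}. No difference in approach.
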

Azumaya \cite[Lemma 3]{Az} proved that if $a \in R$ is strongly $\pi$-regular with $a^n = a^{n+1}x$ and $a^m = ya^{m+1}$ for some $x,\,y \in R$ and $n,\,m \in \mathbb{N}$, then $a^m = a^{m+1}x$ and $a^n = ya^{n+1}$. The following results follows quickly from this and we will frequently use this result.\\ 

The following result can be deduced from \cite{posner} for PI-rings:

\begin{theorem}\label{N4}
Any prime PI-ring is a subring of a matrix ring over a division ring.
\end{theorem}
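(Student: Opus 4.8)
The plan is to obtain the desired embedding from Posner's theorem together with the Artin--Wedderburn structure theorem. First I would record two structural facts about a prime PI-ring $R$: its center $Z$ is nonzero (a standard consequence of the PI-machinery, e.g.\ Rowen's theorem that every nonzero ideal of a prime PI-ring meets the center nontrivially), and $Z$ is an integral domain (the center of any prime ring is a domain). Writing $S = Z \setminus \{0\}$ for the set of nonzero central elements, the next point to check is that each $s \in S$ is a regular element of $R$. This is where primeness enters: if $sr = 0$ with $s$ central, then $sxr = xsr = 0$ for all $x \in R$, so $sRr = 0$, whence the product of the nonzero ideal generated by $s$ with the ideal generated by $r$ vanishes, forcing $r = 0$ by primeness; the left-regular case is symmetric. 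Consequently the localization map $R \to S^{-1}R$ is injective.

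With this in place, I would invoke Posner's theorem \cite{posner} in the following form: the central localization $Q = S^{-1}R$ exists and is a central simple algebra, finite-dimensional over the field of fractions $F = S^{-1}Z$ of $Z$. By Artin--Wedderburn, every finite-dimensional central simple algebra over a field is isomorphic to $\mathbb{M}_k(D)$ for some division ring $D$ and some positive integer $k$, so $Q \cong \mathbb{M}_k(D)$. Since $R$ embeds in $Q$ by the previous paragraph, $R$ is a subring of $\mathbb{M}_k(D)$, which is exactly the assertion.

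The substantive content is entirely carried by Posner's theorem, which is cited; the remaining work is routine. The only steps one verifies directly are that the center is a nonzero integral domain and that nonzero central elements are regular in $R$, both immediate from primeness, followed by a straightforward application of Artin--Wedderburn to pass from the ``central simple algebra'' conclusion of Posner to the explicit form ``matrix ring over a division ring.'' Thus I expect no real obstacle beyond correctly stating and applying Posner's theorem in its localized form.
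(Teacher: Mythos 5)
Your proposal is correct and matches the paper's intent exactly: the paper offers no argument of its own beyond the citation of Posner's theorem, and your writeup is precisely the standard deduction from it (nonzero central elements of a prime ring are regular, so $R$ embeds in the central localization $Q = S^{-1}R$, which Posner's theorem identifies as a finite-dimensional central simple algebra, hence $\mathbb{M}_k(D)$ by Artin--Wedderburn). The details you supply --- that the center is a nonzero domain and that the localization map is injective --- are the right ones and are argued correctly.
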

Using the above result, we have the following result.
\begin{theorem}\label{PI}
Let $R$ be a PI-ring and $a \in R$ be such that $a^n=a^{n+1}x$ for some $n \in \mathbb{N}$ and $x \in R$, then $a^n=ya^{n+1}$ for some $y \in R$.
\end{theorem}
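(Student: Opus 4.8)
The plan is to reduce the assertion to the behaviour of $a$ in the prime factor rings of $R$ and then feed in the structure theory of prime PI-rings. Since $a$ satisfies $a^n = a^{n+1}x$, it is right strongly $\pi$-regular, so it suffices to prove that in a PI-ring every right strongly $\pi$-regular element is in fact strongly $\pi$-regular. The engine for the reduction is Theorem \ref{N3.1}: it says that if every prime factor ring of $R$ embeds into a ring in which right strongly $\pi$-regular elements are automatically strongly $\pi$-regular, then the same implication holds in $R$ itself. Thus the whole statement will follow once I verify the hypothesis of Theorem \ref{N3.1} for a PI-ring.

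Here the structural inputs enter. Because the class of PI-rings is closed under homomorphic images (noted in the introduction), every prime factor ring $R/P$ of a PI-ring $R$ is again a prime PI-ring. By Theorem \ref{N4}, each such $R/P$ embeds as a subring of a matrix ring $\mathbb{M}_k(D)$ over a division ring $D$. It then remains to observe that $\mathbb{M}_k(D)$ is a ring in which every right strongly $\pi$-regular element is strongly $\pi$-regular: this is immediate because $\mathbb{M}_k(D)$ is (simple) Artinian, and in an Artinian ring the chains $a R \supseteq a^2 R \supseteq \cdots$ and $Ra \supseteq Ra^2 \supseteq \cdots$ both stabilise, so every element is strongly $\pi$-regular. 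In particular the required one-sided-to-two-sided implication holds there.

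With this, the hypothesis of Theorem \ref{N3.1} is met, and I conclude that every right strongly $\pi$-regular element of $R$ is strongly $\pi$-regular; applying this to our $a$ produces $m \in \mathbb{N}$ and $y' \in R$ with $a^m = y' a^{m+1}$. Finally I combine $a^n = a^{n+1}x$ with $a^m = y' a^{m+1}$ using the result of Azumaya quoted just before Theorem \ref{N4}, which upgrades the witness so that the left identity holds for the same power, yielding $a^n = y a^{n+1}$ for a suitable $y$. I do not anticipate a genuine obstacle, since the proof is an assembly of the quoted results; the only real content is supplied by Theorem \ref{N4} (the embedding of prime PI-rings into matrix rings over a division ring), and the points needing care are the routine ones: confirming that $R/P$ stays both PI and prime under the quotient, and using Azumaya's lemma to match exponents so that the left witness is obtained for the given $n$ rather than for some larger power.
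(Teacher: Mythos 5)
Your proposal is correct and follows essentially the same route as the paper: reduction to prime factor rings via Theorem \ref{N3.1} (equivalently, Proposition \ref{N2} plus Theorem \ref{N3}, which is what the paper invokes directly), the embedding of each prime PI factor into an Artinian ring $\mathbb{M}_k(D)$ via Theorem \ref{N4}, and Azumaya's lemma to transfer the left witness back to the exponent $n$. No substantive difference from the paper's argument.
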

\begin{proof}
Let $P$ be any prime ideal of $R$. As $R/P$ is prime ring satisfying some polynomial identity, thus by Theorem \ref{N4}, $R/P$ is subring of $\mathbb{M}_n(D)$ for some division ring $D$. Since $\mathbb{M}_n(D)$ is artinian, so $a+P$ is strongly $\pi$-regular in $\mathbb{M}_n(D)$. By Proposition \ref{N2}, $a+P$ is strongly $\pi$-regular in $R/P$ for any prime ideal $P$. Therefore,  $a$ is strongly $\pi$-regular in $R$ by Theorem \ref{N3}. Lastly, by Azumaya's result \cite[Lemma 3]{Az}, we have the required result.
\end{proof}

\medskip

It is already known that matrix ring over a PI-ring is again PI (\cite[Exercise 1.8]{Rowen}), Thus, we have the following immediate result. \\
\begin{corollary}
Let $A\in \mathbb{M}_m(S)$, where $S$ is a PI-ring and $m \in \mathbb{N}$. Then for any positive integer $n$, $A^n = A^{n+1}X$ implies $A^n= YA^{n+1}$ for some $Y \in \mathbb{M}_m(S)$.
\end{corollary}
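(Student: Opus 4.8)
The plan is to obtain this corollary as an immediate instance of Theorem~\ref{PI}, applied not to $S$ itself but to the matrix ring $R = \mathbb{M}_m(S)$. Theorem~\ref{PI} asserts precisely that in any PI-ring a right strongly $\pi$-regular element is automatically strongly $\pi$-regular; so once I know that $\mathbb{M}_m(S)$ falls within the scope of that theorem, I can feed in $a = A$ and $x = X$ and read off $A^n = YA^{n+1}$ with $Y \in \mathbb{M}_m(S)$.

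The one fact that must be in place is that the class of PI-rings is closed under passage to matrix rings, i.e.\ that $\mathbb{M}_m(S)$ is again a PI-ring whenever $S$ is. This is exactly the content of \cite[Exercise~1.8]{Rowen}: if $S$ satisfies a monic polynomial identity, then for each $m$ the ring $\mathbb{M}_m(S)$ satisfies a (generally different, but still monic) polynomial identity, the degree growing with both the identity satisfied by $S$ and the size $m$. I would simply cite this closure property rather than reconstruct the explicit bound, since nothing in the statement depends on the precise identity.

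With that closure in hand the argument is a one-line deduction: $R = \mathbb{M}_m(S)$ is a PI-ring, $A \in R$ satisfies $A^n = A^{n+1}X$ with $X \in R$, so Theorem~\ref{PI} returns an element $Y \in R = \mathbb{M}_m(S)$ with $A^n = Y A^{n+1}$. I do not anticipate any genuine obstacle, as all the substantive work has already been absorbed into Theorem~\ref{PI} (and, beneath it, into Theorem~\ref{N4}, Proposition~\ref{N2}, and Theorem~\ref{N3}); the only point to verify is that the matrix-ring-is-PI statement is invoked correctly, so that the ambient ring to which Theorem~\ref{PI} is applied is the full $\mathbb{M}_m(S)$ and not merely a subring of it.
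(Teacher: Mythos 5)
Your proposal is correct and is exactly the paper's argument: the author likewise cites \cite[Exercise 1.8]{Rowen} to conclude that $\mathbb{M}_m(S)$ is again a PI-ring and then applies Theorem \ref{PI} directly to $A \in \mathbb{M}_m(S)$. Nothing further is needed.
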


The polynomial identity $$s_n(x_1, x_2, \dots, x_n) = \sum_{\sigma \in S_n} \operatorname{sgn}(\sigma) \, x_{\sigma(1)} x_{\sigma(2)} \cdots x_{\sigma(n)}$$ where $S_n$ is the symmetric group of order $n$ is of great importance in the theory of PI-rings and is known as the $nth$ standard identity. \\

Amitsur and Levitzki \cite[Theorem 1]{AmitsurLevitzki} proved that the matrix ring $\mathbb{M}_n(R)$ over a commutative ring $R$ is PI-ring and satisfies the polynomial identity $s_{2n}$. Thus, for $m=2$, the following result validates \cite[Conjecture 3.7]{CalPop}.

\begin{corollary}\label{comm}
 Let $A\in \mathbb{M}_m(S)$, where $S$ is a commutative ring and $m \in \mathbb{N}$. Then for any positive integer $n$, $A^n\mathbb{M}_m(S) = A^{n+1}\mathbb{M}_m(S)$ implies $\mathbb{M}_m(S)A^n= \mathbb{M}_m(S)A^{n+1}$.	
\end{corollary}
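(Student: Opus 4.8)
The plan is to reduce the two one-sided ideal equalities to single element equations and then invoke Theorem \ref{PI} directly, so that the entire argument rests on recognizing that $\mathbb{M}_m(S)$ is itself a PI-ring.

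First I would record the translation between the ideal and element formulations. Since $\mathbb{M}_m(S)$ is unital, writing $I$ for its identity, the inclusion $A^{n+1}\mathbb{M}_m(S) \subseteq A^n\mathbb{M}_m(S)$ is automatic because $A^{n+1} = A^n A$; hence the hypothesis $A^n\mathbb{M}_m(S) = A^{n+1}\mathbb{M}_m(S)$ is equivalent to $A^n = A^n I \in A^{n+1}\mathbb{M}_m(S)$, that is, to the existence of $X \in \mathbb{M}_m(S)$ with $A^n = A^{n+1}X$. The same reasoning applied on the left shows that the desired conclusion $\mathbb{M}_m(S)A^n = \mathbb{M}_m(S)A^{n+1}$ is equivalent to the existence of $Y \in \mathbb{M}_m(S)$ with $A^n = YA^{n+1}$.

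Next I would verify that $\mathbb{M}_m(S)$ falls under the hypotheses of Theorem \ref{PI}. As $S$ is commutative it satisfies the identity $x_1 x_2 - x_2 x_1 = 0$ and is thus a PI-ring; by the Amitsur--Levitzki theorem \cite[Theorem 1]{AmitsurLevitzki} the matrix ring $\mathbb{M}_m(S)$ satisfies the standard identity $s_{2m}$ and so is again a PI-ring. (Equivalently, one may invoke the preceding corollary, since a matrix ring over a PI-ring is PI.) With both ingredients in hand the conclusion is immediate: applying Theorem \ref{PI} to the PI-ring $R = \mathbb{M}_m(S)$ and the element $a = A$, the relation $A^n = A^{n+1}X$ obtained in the first step produces some $Y \in \mathbb{M}_m(S)$ with $A^n = YA^{n+1}$, which by the left-hand translation gives $\mathbb{M}_m(S)A^n = \mathbb{M}_m(S)A^{n+1}$.

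I expect no genuine obstacle beyond these bookkeeping steps: all of the substance has already been absorbed into Theorem \ref{PI}, whose proof routes through the prime factor rings via Theorem \ref{N3}, Proposition \ref{N2}, and the structure theorem for prime PI-rings (Theorem \ref{N4}). The only point deserving a little care is the use of the unit of $\mathbb{M}_m(S)$ to make the two-sided ideal equalities equivalent to single element equations; this is precisely what allows a statement phrased in terms of ideals to be fed into the element-wise Theorem \ref{PI}, and specializing $m=2$ then recovers \cite[Conjecture 3.7]{CalPop}.
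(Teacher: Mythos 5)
Your proposal is correct and follows essentially the same route as the paper: the paper also deduces Corollary \ref{comm} by observing, via the Amitsur--Levitzki theorem, that $\mathbb{M}_m(S)$ is a PI-ring and then applying Theorem \ref{PI} (through its matrix-ring corollary). Your explicit translation between the one-sided ideal equalities and the element equations $A^n = A^{n+1}X$, $A^n = YA^{n+1}$ is a harmless bookkeeping step the paper leaves implicit.
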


The above corollary can also be proved by alternative methods available in the literature. In fact, we provide three additional proofs, the second of which is similar in spirit to the proof of the Theorem \ref{PI}.\\

The following result is classically known. For its history and a proof see \cite[Corollary 1.2]{grover2022matrices}. Let $f(x) \in \mathbb{M}_n(S)[x]$, where $S$ is a commutative ring. We can regard $f(x)$ to be an element of $\mathbb{M}_n(S[x])$ and so we can define $\det(f(x))$ naturally. 
\begin{proposition}\label{N5}
Let $S$ be a commutative ring. If $A \in \mathbb{M}_n(S)$ satisfies $f(x) \in  \mathbb{M}_n(S)[x]$, then $A$ satisfies $\det(f(x))I$, where $I$ is the identity matrix of $\mathbb{M}_n(S)$.
\end{proposition}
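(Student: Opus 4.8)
The plan is to reduce the claim to a noncommutative analogue of the remainder theorem combined with the adjugate identity over $S[x]$. Throughout, for $g(x) = \sum_i C_i x^i \in \mathbb{M}_n(S)[x]$ I write $g(A) = \sum_i C_i A^i$ for the right-hand evaluation, so that ``$A$ satisfies $g$'' means $g(A) = 0$; the left-hand version is entirely symmetric. The one point requiring care is that this evaluation is \emph{not} a ring homomorphism, since $A$ need not commute with the matrix coefficients $C_i$.

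First I would isolate the following elementary lemma: if $h(x) = g(x) f(x)$ in $\mathbb{M}_n(S)[x]$ and $f(A) = 0$, then $h(A) = 0$. Writing $f = \sum_i C_i x^i$ and $g = \sum_j D_j x^j$, the indeterminate $x$ is central, so the coefficient of $x^k$ in $h$ is $\sum_{i+j=k} D_j C_i$, whence $h(A) = \sum_{i,j} D_j C_i A^{i+j} = \sum_j D_j \bigl(\sum_i C_i A^i\bigr) A^j = \sum_j D_j\, f(A)\, A^j$. The crucial—and only—use of commutativity here is that the powers of the \emph{single} matrix $A$ commute, which lets me split $A^{i+j} = A^i A^j$ and factor $f(A)$ out of the middle. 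Once $f(A) = 0$, every summand vanishes and $h(A) = 0$.

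Second, I would view $f(x)$ as a matrix in $\mathbb{M}_n(S[x])$ and invoke the adjugate identity over the commutative ring $S[x]$: with $B(x) = \operatorname{adj}(f(x))$ one has $B(x)\, f(x) = \det(f(x))\, I$ in $\mathbb{M}_n(S[x]) = \mathbb{M}_n(S)[x]$. Applying the lemma with $g = B$ and the given right factor $f$ (for which $f(A) = 0$ by hypothesis) forces $\bigl(\det(f(x)) I\bigr)$ to evaluate to $0$ at $A$. Since the coefficients of $\det(f(x)) I$ are scalar matrices, this evaluation is exactly $(\det f)(A)$, giving $(\det f)(A) = 0$, i.e.\ $A$ satisfies $\det(f(x)) I$.

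I expect the main obstacle to be conceptual rather than computational: recognizing that $\operatorname{adj}(f(x))$ must be placed on the correct side so that the known-vanishing factor $f$ sits where the remainder-type lemma applies, and checking that no hidden commutativity between $A$ and the coefficients is used beyond the commuting of powers of $A$. As a consistency check, taking $f(x) = xI - A$ recovers the Cayley--Hamilton theorem, since then $f(A) = 0$ and $\det(f(x)) = \det(xI - A)$ is the characteristic polynomial.
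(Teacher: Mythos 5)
Your proof is correct and is the standard adjugate argument for this classical fact; the paper itself supplies no proof of Proposition \ref{N5}, deferring to the cited reference, and your factorization lemma plus $\operatorname{adj}(f(x))\,f(x) = \det(f(x))\,I$ over $S[x]$ is exactly the expected route (with the Cayley--Hamilton sanity check confirming it). One small remark: in the paper's later application (the First Proof of Corollary \ref{comm}) the polynomial $x^nI - x^{n+1}B$ is evaluated with $A$ on the \emph{left} of the coefficients, i.e.\ $A^nI - A^{n+1}B = 0$, so the left-hand version you mention only in passing is the one actually needed there --- but, as you correctly observe, that case is handled symmetrically by placing the adjugate on the other side, $f(x)\operatorname{adj}(f(x)) = \det(f(x))\,I$, and factoring $f(A)$ out as $\sum_j A^j f(A) D_j$.
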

Recall that a ring $R$ is called {\em clean} if every element is a sum of a unit and an idempotent. It is well known that every clean ring is an exchange ring. Burgess and Raphael proved the following result in \cite[Proposition 2.4]{burgess}.

\begin{proposition}\label{N7}
Every commutative ring embeds in a commutative clean ring.
\end{proposition}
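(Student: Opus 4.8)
The plan is to realize the clean overring explicitly as a product of local rings, which is arguably the most direct route. First I would record the two structural facts that make such a product clean. Every local ring $(R,\mathfrak m)$ is clean: for $a \in R$, either $a \notin \mathfrak m$, so $a = a + 0$ is a unit plus the idempotent $0$, or $a \in \mathfrak m$, in which case $a - 1 \equiv -1 \pmod{\mathfrak m}$ is a unit and $a = (a-1) + 1$ displays $a$ as a unit plus the idempotent $1$. Secondly, cleanness is inherited by arbitrary direct products: an element $(a_i)$ of $\prod_i R_i$ decomposes componentwise as $a_i = u_i + e_i$ with each $u_i$ a unit and each $e_i$ idempotent, and then $(u_i)$ is a unit and $(e_i)$ an idempotent of the product. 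Consequently any product of local rings is a commutative clean ring, and it suffices to embed $S$ into one such product.

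The embedding I would use is the canonical map $\varphi\colon S \to \prod_{M} S_M$, where the product runs over the maximal ideals $M$ of $S$, the ring $S_M$ is the localization of $S$ at $M$, and $\varphi(s) = (s/1)_M$. Each $S_M$ is local, so by the previous paragraph the target is a commutative clean ring, and $\varphi$ is visibly a unital ring homomorphism. The one substantive point is injectivity, which I would establish through an annihilator argument: suppose $s \in \Ker \varphi$, so that $s/1 = 0$ in $S_M$ for every maximal ideal $M$. If $s \neq 0$ then $1 \notin \mathrm{Ann}(s)$, so $\mathrm{Ann}(s) = \{r \in S : rs = 0\}$ is a proper ideal and hence lies inside some maximal ideal $M_0$. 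But $s/1 = 0$ in $S_{M_0}$ produces a $t \notin M_0$ with $ts = 0$, i.e.\ $t \in \mathrm{Ann}(s) \subseteq M_0$, contradicting $t \notin M_0$. Therefore $s = 0$ and $\varphi$ is injective.

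The proof is elementary, so there is no serious obstacle; the only place demanding any care is the injectivity step, and the annihilator-in-a-maximal-ideal trick disposes of it cleanly while simultaneously showing that the product may be indexed by the maximal ideals alone rather than all primes. I would remark that this argument is likely more pedestrian than the original structural approach of Burgess and Raphael (which proceeds via the topology of the maximal spectrum), but it has the virtue of being self-contained and of producing the embedding completely explicitly.
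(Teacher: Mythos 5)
Your proof is correct. Both structural facts you invoke are right: a local ring is clean because any element outside the maximal ideal is a unit (plus the idempotent $0$) and any element inside it differs from a unit by the idempotent $1$; and cleanness passes to arbitrary direct products componentwise. The injectivity of $S \to \prod_{M} S_M$ via the annihilator argument is the standard ``being zero is a local property'' fact and is carried out correctly, including the observation that $\mathrm{Ann}(s)$ is proper precisely when $s \neq 0$. The only comparison to make is that the paper does not prove this proposition at all --- it is quoted verbatim from Burgess and Raphael \cite[Proposition 2.4]{burgess} --- so your argument is a genuinely self-contained substitute rather than a reproduction. What your route buys is an explicit clean overring, namely the product of the localizations at maximal ideals, obtained with nothing beyond the existence of maximal ideals and elementary localization; what it gives up is any control over the size or additional properties of the overring (Burgess and Raphael are also interested in minimal or functorial clean extensions, which is where the topological machinery earns its keep). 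One cosmetic remark: the statement is for commutative rings with unity (cleanness presupposes units), and your construction silently uses this in asserting that $\mathrm{Ann}(s)$ lies in a maximal ideal; that is consistent with the paper's usage here, even though some nearby results are stated for rings without unity.
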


Now, we provide alternative proofs of Corollary \ref{comm}. In view of Azumaya's result \cite[Lemma 3]{Az}, it is enough to show that $A$ is strongly $\pi$-regular in $\mathbb{M}_m(S)$. 

\medskip
\noindent {\bf First Proof.} Let $A^n = A^{n+1}B$ for some $B \in  \mathbb{M}_m(S)$. As $A$ satisfies a polynomial $x^nI - x^{n+1}B \in \mathbb{M}_m(S)[x]$, so by Proposition \ref{N5},  $A$ also satisfies $\det(x^nI - x^{n+1}B)I$ which is a polynomial of the type $Ix^{nm} + s_1I x^{nm+1} +\ldots + s_{m}I x^{nm+m} \in \mathbb{M}_m(S)[x]$. This implies that $A^{nm} = A^{nm+1}f(A)$ for some polynomial $f(x)\in \mathbb{M}_m(S)[x]$ implying that $A$ is strongly $\pi$-regular in $\mathbb{M}_m(S)$. \qed

\bigskip
\noindent {\bf Second Proof.} By Proposition \ref{N7}, $S$ embeds in a commutative exchange ring, say $T$. So $\mathbb{M}_m(S)$ embeds in a Dedekind-finite exchange ring $\mathbb{M}_m(T)$. By Lemma \ref{Dfer}, $A$ is strongly $\pi$-regular in $\mathbb{M}_m(T)$ and hence $A$ is strongly $\pi$-regular in $\mathbb{M}_m(S)$ using Proposition \ref{N2} .\qed

\bigskip

In \cite{Shep}, Shepherdson constructed the following ring $R$ to answer a question raised by van der Waerden: Does there exists an element $A \in \mathbb{M}_2(R)$ that is both right invertible and left zero divisor in $\mathbb{M}_2(R)$? This example is also useful in demonstrating that, even in rings where all left (and right) strongly regular elements are strongly regular, the matrix rings over such rings do not necessarily inherit this property.
\begin{example}
	
Let $X = \{a, b, c,d,w,x,y,z\}$ be a set of indeterminates. Consider \[R = \mathbb{Q} \langle a, b, c,d,w,x,y,z\rangle / \langle aw + by - 1, \; ax + bz, \; cw + dy, \; cx + dz - 1 \rangle.\]

For \[
A = \begin{bmatrix}
	a & b \\
	c & d
\end{bmatrix} and \quad
B = \begin{bmatrix}
	w & x \\
	y & z
\end{bmatrix}\]
we have, $AB=I$, where $I$ is the $2 \times 2$ identity matrix.
In other words,
since $R$ is a domain then it is easy to see that every left (or right) strongly regular element is strongly regular. The construction is in such a way that clearly  $\mathbb{M}_2(R)$ is not even Dedekind-finite.  
\end{example}

		%\leavevmode\thispagestyle{empty}\newpage
		
		\bibliographystyle{amsplain}
		\bibliography{Dimplebib}
		
	\end{document}